\newcounter{rot}
\newcommand{\hatt}[1]{\widehat #1}
\def\hc{\hatt{c}}
\def\a{\alpha} \def\b{\beta}  \def\D{\Delta}
\def\e{\varepsilon}    \def\g{\gamma}
\def\G{\Gamma}  
     \def\l{\lambda}
\def\cP{{\cal P}}
\newtheorem{theorem}{Theorem}
\newtheorem{lemma}[theorem]{Lemma}
\newtheorem{corollary}[theorem]{Corollary}
\newtheorem{Remark}{Remark}
\newcommand{\brac}[1]{\left(#1\right)}
\newcommand{\bfrac}[2]{\left(\frac{#1}{#2}\right)}
\newcommand{\set}[1]{\left\{#1\right\}}
\newcommand{\ind}[1]{\mbox{{\large 1}} \brac{#1}}
\def\E{\mathbb{E}}
\def\Pr{\mathbb{P}}
\newcommand{\ignore}[1]{}
\newcommand{\beq}[2]{\begin{equation}\label{#1}#2\end{equation}}
\newcommand{\mults}[1]{\begin{multline*}#1\end{multline*}}
\def\dd{\text{d}}
\begin{document}
\author{Alan Frieze\thanks{Research supported in part by NSF grant DMS-1952285} \  and Tomasz Tkocz\thanks{Research supported in part by NSF grant DMS-1955175} \\Department of Mathematical Sciences\\Carnegie Mellon University\\Pittsburgh PA 15213}

\title{Shortest paths with a cost constraint: a probabilistic analysis}
\maketitle

\begin{abstract}
We consider a constrained version of the shortest path problem on the complete graphs whose edges have independent random lengths and costs. We establish the asymptotic value of the minimum length as a function of the cost-budget within a wide range.
\end{abstract}

\bigskip

\begin{footnotesize}
\noindent {\em 2020 Mathematics Subject Classification.} 05C80, 90C27, 60C05.

\noindent {\em Key words.} Random shortest path, cost constraint, weighted graph.
\end{footnotesize}

\bigskip

\section{Introduction}

Let the edges of the complete graph $K_n$ be given independent random edge lengths $w(e)$ and random costs $c(e)$ for $e\in E_n=E(K_n)$. Suppose further that we are given a budget $c_0$ and we need to find a path $P$ from vertex 1 to vertex 2 of minimum length $w(P)=\sum_{e\in P}w(e)$ whose cost $c(P)=\sum_{e\in P}c(e)$ satisfies $c(P)\leq c_0$. More precisely, let $\cP$ denote the set of paths from 1 to 2 in $K_n$. We wish to solve
\[
\text{{\bf CSP:} minimize }w(P)\text{ subject to }P\in \cP,c(P)\leq c_0. 
\] 
This is a well studied problem, at least in the worst-case, see for example Chen and Nie \cite{CN}, Climaco and Martins \cite{CM}, Machuca, Mandow, P\'erez de la Cruz and Ruiz-Sepulveda \cite{MMPR}, Nielsen, Pretolani and Anderesen \cite{NPA}, Pascoal, Captivo and Cl\'imaco \cite{PCC}.

In this paper we consider the case where $w(e),c(e),e\in E_n$ are independent random variables and we let $L_n=L_n(c_0)$ denote the random minimum length of a within budget shortest path. Also, let $H_n$ denote the {\em hop-count} (number of edges) in the shortest such path.  In particular we will assume that $w(e),c(e)$ are independent copies of the uniform $[0,1]$ random variable $U$. In a recent paper Frieze, Pegden, Sorkin and Tkocz \cite{FPT} considered a slightly more general setting, but were only able to bound $L_n$ w.h.p. between two values. In the simpler setting of this paper we are able to get an asymptotically correct estimate of $L_n$. 

{\bf Notation:} we say that $A_n\lesssim B_n$ if $A_n\leq (1+o(1))B_n$ as $n\to \infty$; $A_n\gtrsim B_n$ if $A_n\geq (1+o(1))B_n$ as $n\to \infty$; $A_n\approx B_n$ if $A_n\lesssim B_n$ and $A_n\gtrsim B_n$.

We prove the following: 
\begin{theorem}\label{th1}
Suppose that $w(e),c(e),e\in E_n$ are independent copies of a uniform random variable on $[0,1]$. Suppose that 
\begin{equation}\label{eq:c0-assum}
\frac{1}{\sqrt{2}}\frac{\log^2n}{n}\leq c_0 \leq \frac{1}{2\sqrt{2}}. 
\end{equation}
Then w.h.p.
\[
L_n\approx \frac{\log^2n}{4c_0n}\qquad\text{ and }\qquad H_n\approx \frac{\log n}{2}.
\]
\end{theorem}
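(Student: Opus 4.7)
The plan is to prove the lower and upper bounds on $L_n$ separately by the first- and second-moment methods applied to the count of feasible $1$-to-$2$ paths, with the hop-count asymptotic emerging as a by-product of where these moment computations are saddle-pointed.

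\textbf{Lower bound.} Write $L^*=\log^2 n/(4c_0 n)$. For fixed small $\varepsilon>0$, let $N$ count $1$-to-$2$ paths $P$ with $w(P)\le(1-\varepsilon)L^*$ and $c(P)\le c_0$. Partitioning by hop count $H$ and using independence of the $w$'s and $c$'s together with the deterministic simplex bound $\Pr(\sum_{i=1}^H U_i\le t)\le t^H/H!$ for nonnegative $U_i$ of density $\le 1$, one obtains
\[
\E[N]\le\sum_{H=1}^{n-1}\frac{(n-2)!}{(n-H-1)!}\cdot\frac{((1-\varepsilon)L^*)^H c_0^H}{(H!)^2}.
\]
Stirling turns the $H$-th summand into $n^{-1}\bigl((1-\varepsilon)nL^*c_0\,e^2/H^2\bigr)^H$, a log-concave sequence peaked at $H\sim\sqrt{1-\varepsilon}\,\log n/2$ with peak value $n^{\sqrt{1-\varepsilon}-1+o(1)}$. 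Laplace's method yields $\E[N]=o(1)$, so $L_n\gtrsim L^*$ whp. The lower bound on $H_n$ follows from the same first-moment analysis: in terms of $\alpha=2H/\log n$ and $\lambda=L/L^*$ the saddle exponent (per $\log n$) is $-1+\alpha+(\alpha/2)\log\lambda-\alpha\log\alpha$, which is strictly negative whenever $\alpha$ is bounded away from $1$ and $\lambda-1$ is sufficiently small, so paths with hop count $\delta$-far from $\log n/2$ cannot attain $w(P)\lesssim L^*$ within cost $c_0$.

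\textbf{Upper bound.} Take $L=(1+\varepsilon)L^*$, $H_0=\lceil\log n/2\rceil$, and let $N'$ count $1$-to-$2$ paths of \emph{exactly} $H_0$ edges satisfying $w(P)\le L$ and $c(P)\le c_0$. Since $L/H_0$ and $c_0/H_0$ both tend to $0$ under \eqref{eq:c0-assum}, the simplex bound is asymptotically sharp, so $\E[N']\sim n^{\varepsilon/2+o(1)}\to\infty$. For the second moment I group ordered pairs $(P_1,P_2)$ of such paths by $s=|E(P_1)\cap E(P_2)|$: a standard path-counting argument gives $\#\{(P_1,P_2):s\text{ shared edges}\}\lesssim n^{2H_0-2}(2H_0^2/n^2)^s/s!$, while integrating the joint feasibility over the shared edges' values produces two Beta integrals whose evaluation yields
\[
\Pr(P_1,P_2\text{ both feasible}\mid s\text{ shared})\lesssim\Pr(P\text{ feasible})^2\cdot\Bigl(\frac{H_0^2}{4Lc_0}\Bigr)^s=\Pr(P\text{ feasible})^2\cdot\Bigl(\frac{n}{4(1+\varepsilon)}\Bigr)^s.
\]
The $n^s$ amplification in the probability exactly cancels one of the two $n^s$ factors in the counting penalty, leaving a per-shared-edge factor of order $H_0^2/n=\log^2 n/(4n)=o(1)$; summing the geometric series gives $\E[(N')^2]\le\E[N']^2\exp\bigl(\log^2 n/(8(1+\varepsilon)n)\bigr)=(1+o(1))\E[N']^2$. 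Paley--Zygmund then yields $N'\ge 1$ whp, so $L_n\lesssim L^*$ and $H_n\le(1+o(1))\log n/2$.

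\textbf{Main obstacle.} The trickiest step is the second-moment estimate: both the count of pairs sharing $s$ edges and their conditional joint-feasibility probability individually blow up by order $n^s$, and only their product exhibits the decisive cancellation $(H_0^2/n)^s=o(1)^s$. Making this cancellation rigorous requires care with the combinatorics of how shared edges can be distributed between the two paths (as a matching, as one or more contiguous subpaths, or mixtures thereof), uniform control of the Beta-integral constant over the full range of $c_0$ in \eqref{eq:c0-assum}, and a margin check near $c_0\sim\log^2 n/n$ where the first-moment surplus $n^{\varepsilon/2}$ leaves little slack.
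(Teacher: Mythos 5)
Your lower bound is fine (it is essentially the simpler first-moment argument the paper itself mentions as an alternative route to its Corollary \ref{cor:Ln-low-bd}), but the upper bound via the second moment has a genuine gap, and it is exactly at the point you flag as the ``decisive cancellation.'' Your count of ordered pairs sharing $s$ edges, $\lesssim n^{2H_0-2}(2H_0^2/n^2)^s/s!$, implicitly charges each shared edge two pinned vertices. That is correct for shared edges interior to both paths, but not for shared edges incident to the terminals $1$ and $2$. For example, the number of pairs sharing exactly the first edge (same second vertex) is of order $n^{2H_0-3}$, not $H_0^2 n^{2H_0-4}$: one extra factor of $n$ compared to your bound. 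Meanwhile the conditional feasibility amplification for one shared edge is, as your Beta-integral computation correctly gives, $\rho_w(1)\rho_c(1)\approx \frac{H_0}{2L}\cdot\frac{H_0}{2c_0}=\frac{n}{4(1+\varepsilon)}$. Hence pairs sharing just the first edge contribute about $\frac{1}{4(1+\varepsilon)}\E[N']^2$ to $\E[(N')^2]$, pairs sharing the first $s$ edges contribute about $(4(1+\varepsilon))^{-s}\E[N']^2$, and likewise for suffixes at vertex $2$. Summing, $\E[(N')^2]\ge (1+c)\E[N']^2$ for an absolute constant $c>0$ (roughly a factor $16/9$ from prefix and suffix sharing), so your claim $\E[(N')^2]=(1+o(1))\E[N']^2$ is false, and Paley--Zygmund yields only $\Pr(N'\ge 1)\ge c'>0$, not a w.h.p.\ statement. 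This is the classical ``common ancestor'' obstruction for near-shortest paths, and overcoming it (truncating the first/last edges, conditioning on the neighborhoods of $1$ and $2$, or a boosting/exploration argument) requires a substantive additional idea that the proposal does not supply.

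The paper avoids the second moment altogether. Its upper bound comes from Lagrangean duality: with $\lambda^*=\frac{\log^2 n}{4c_0^2 n}$ it bounds $\psi(\lambda^*)=\min_P\{w(P)+\lambda^* c(P)\}$ by $\lesssim\frac{\log^2 n}{2c_0 n}$, using stochastic domination of $U_1+\lambda^* U_2$ by $(2\lambda^*\tilde\xi)^{1/2}$ and the Bhamidi--van der Hofstad theorem for first-passage percolation with edge weights $\xi^{1/2}$ (this is also where the constraint \eqref{eq:c0-assum} and the hop-count asymptotics $H_n\approx\frac{\log n}{2}$ come from). It then combines this with the stronger first-moment statement that w.h.p.\ \emph{every} path satisfies $w(P)c(P)\gtrsim\frac{\log^2 n}{4n}$ (Lemma \ref{lm:wc}) to show that the dual-optimal path has cost at most $c_0(1+o(1))$ and length $\approx\frac{\log^2 n}{4c_0 n}$, and finally reruns the argument with a slightly reduced budget $\hat c_0=c_0(1-o(1))$ to get a path strictly within budget. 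If you want to salvage a moment-method proof of the upper bound, you would need to handle the terminal-edge sharing terms separately rather than fold them into the $(H_0^2/n)^s$ heuristic.
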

The main new ideas of the paper are in the proof of Theorem \ref{th1}. With a little effort this theorem can be generalised to prove the following:
\begin{theorem}\label{th2}
Suppose now that $w(e),c(e),e\in E_n$ are independent copies of $U^\g$ where $0<\g\leq 1$. Suppose that 
\[
\frac{a_1\log^2n}{n}\leq c_0\leq a_2\qquad\text{ for constants $a_1,a_2$ dependent on $\g$.} 
\]
Then w.h.p.
\[
L_n\approx  \frac{\g\log^2n}{4\G\brac{\frac{1}{\g}+1}^2c_0n^\g}\qquad\text{ and }\qquad H_n\approx \frac{\g\log n}{2}.
\]
\end{theorem}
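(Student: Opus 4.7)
The proof of Theorem \ref{th2} follows the same architecture as the proof of Theorem \ref{th1}, with the uniform-distribution computations replaced by their $U^\g$ analogues. The key new input is a Dirichlet-type identity: for $X_1,\dots,X_h$ i.i.d.\ uniform on $[0,1]$ and $L \leq 1$, the substitution $Y_i = X_i^\g$ combined with the classical Dirichlet volume formula gives
\[
\Pr\brac{\sum_{i=1}^h X_i^\g \leq L} \;=\; \frac{\G\brac{\frac{1}{\g}+1}^h\, L^{h/\g}}{\G\brac{\frac{h}{\g}+1}},
\]
which is the source of the factor $\G\brac{\frac{1}{\g}+1}$ in the statement; the analogous bound holds with $L$ replaced by $c_0$ for the cost.

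For the lower bound I would use a first-moment argument. The expected number of simple $h$-edge paths from $1$ to $2$ with length at most $L$ and cost at most $c_0$ is, up to a factor $\ooi$,
\[
N(h,L) \;\approx\; n^{h-1} \cdot \frac{\G\brac{\frac{1}{\g}+1}^{2h}(Lc_0)^{h/\g}}{\G\brac{\frac{h}{\g}+1}^2}.
\]
Applying Stirling to $\G\brac{\frac{h}{\g}+1}$ shows that $N(h,L)$ (viewed as a function of $h$ for fixed $L$) is log-concave with maximum attained at $h^* \approx \g\log n/2$. The condition $\sum_h N(h,L) = o(1)$, which by the Gaussian envelope around $h^*$ reduces to $N(h^*,L) = o((\log n)^{-1/2})$, then yields the lower bound $L_n \gtrsim \frac{\g\log^2 n}{4\G(\frac{1}{\g}+1)^2 c_0 n^\g}$ together with the claim $H_n \gtrsim \g\log n/2$.

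For the upper bound I would adapt the construction from the proof of Theorem \ref{th1}. The idea is to restrict attention to the random subgraph of $K_n$ spanned by edges with $w(e)\leq w^*$ and $c(e) \leq c^*$; this subgraph has edge probability $(w^*c^*)^{1/\g}$ (since $\Pr(w\leq w^*)=(w^*)^{1/\g}$ when $w=U^\g$), and the thresholds $w^*,c^*$ are chosen so that $h w^*$ and $h c^*$ match the target length and budget for $h\approx\g\log n/2$. Within this $G(n,p)$-like subgraph one locates an $h$-edge path from $1$ to $2$ by the usual branching-process / BFS comparison, and concentration of its length and cost around the target values follows from Chernoff-type bounds for sums of i.i.d.\ copies of $U^\g$. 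The main obstacle is the upper bound: the Chernoff and order-statistic estimates used in Theorem \ref{th1} have to be redone for the heavier-tailed variables $U^\g$, and the $G(n,p)$ coupling has to be reparameterised through $p=(w^*c^*)^{1/\g}$ rather than $p=w^*c^*$. The hypothesis $a_1\log^2 n/n \leq c_0 \leq a_2$, with $a_1,a_2$ depending on $\g$, delineates the regime in which both these ingredients remain valid: $h^*\to\infty$, the Dirichlet integrals are unaffected by the constraints $X_i\leq 1$, and the coupling with a supercritical $G(n,p)$ is tight.
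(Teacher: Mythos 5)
Your lower-bound half is fine: counting paths with $w(P)\le L$, $c(P)\le c_0$ via the Dirichlet volume formula is exactly the ``much simpler first moment argument'' the paper alludes to in Remark \ref{rem:easier}, and it does give $L_n\gtrsim$ the stated threshold and $H_n\gtrsim \frac{\g\log n}{2}$ (though you should check your constant against the paper's \eqref{x1}, where the optimisation produces $\G\brac{\tfrac1\g+1}^{2\g}$, not $\G\brac{\tfrac1\g+1}^{2}$). The genuine gap is in your upper bound, which is also where the whole difficulty of the theorem sits. First, the construction you describe is not ``the construction from the proof of Theorem \ref{th1}'': the paper has no $G(n,p)$/BFS argument at all. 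Its upper bound comes from Lagrangean duality — analysing $\psi(\l)=\min_{P\in\cP}\set{w(P)+\l c(P)}$, whose edge weights $U_1^\g+\l U_2^\g$ are compared near $0$ with $\xi^{\g/2}$ so that the Bhamidi--van der Hofstad theorem \cite{BH} (with $s=\g/2$) gives the sharp value of $\psi(\l^*)$ — combined with the ``for \emph{every} path $w(P)c(P)\gtrsim\cdots$'' lemma (the analogue of Lemma \ref{lm:wc}), which forces the dual-optimal path to have cost $\le c_0(1+o(1))$, and a final run of the argument with the shrunk budget $\hc_0=c_0(1-2\e^{1/2})$ as in Section \ref{s3}.

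Second, and more importantly, your proposed route cannot reach the claimed constant. If you keep only edges with $w(e)\le w^*$, $c(e)\le c^*$, where $w^*\approx L/h$, $c^*\approx c_0/h$, then a fixed $h$-edge path survives with probability $\brac{(w^*c^*)^{1/\g}}^h=(Lc_0)^{h/\g}h^{-2h/\g}$, whereas the true probability that $w(P)\le L$ and $c(P)\le c_0$ is $\approx \G\brac{\tfrac1\g+1}^{2h}(Lc_0)^{h/\g}\big/\G\brac{\tfrac h\g+1}^{2}$, which by Stirling is larger by a factor of order $\brac{\g e\,\G\brac{\tfrac1\g+1}^{\g}}^{2h/\g}$, exponential in $h$. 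Imposing per-edge caps of size $(\text{target})/h$ throws away exactly the simplex-versus-cube volume gain that produces the constant $\frac{1}{4\G(\frac1\g+1)^{2\g}}$. Concretely, optimising over $h$ the requirement $n^{h-1}\brac{(Lc_0)^{1/\g}h^{-2/\g}}^{h}\to\infty$ gives an existence threshold at $Lc_0\approx\frac{\g^2e^2}{4}\,\frac{\log^2 n}{n^\g}$, a constant factor (e.g.\ $e^2$ when $\g=1$) above the true threshold; so this scheme can only reproduce the order-of-magnitude bounds of \cite{FPT}, not the asymptotics of Theorem \ref{th2}. (There is also a secondary issue: the edge weights along a path produced by BFS in the truncated graph are conditioned objects, not i.i.d.\ truncated copies of $U^\g$, so the advertised Chernoff step is not immediate.) To close the gap you need an argument that works with the sum constraints directly — in the paper, this is precisely the role of the dual $\psi(\l^*)$ together with the universal product lower bound on $w(P)c(P)$.
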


\section{Outline of paper}

We will obtain an estimate of $L_n$ in two distinct ways and combine them to give us what we need. In Section \ref{s1} we use the first moment method to get a lower bound and in Section \ref{s2} we use Lagrangean Duality to obtain another bound. We combine the two bounds and finish the proof of Theorem \ref{th1} in Section \ref{s3}. We then give a sketch proof of Theorem \ref{th2} in Section \ref{Mogen}.

\section{First Moment}\label{s1}

The goal of this section is to prove the following lemma about a high probability bound on the product $w(P)c(P)$ for \emph{every} path $P \in \cP$.

\begin{lemma}\label{lm:wc}
Suppose that $w(e),c(e),e\in E_n$ are independent copies of $U$. W.h.p. for every path $P \in \cP$, we have
\[
w(P)c(P) \gtrsim \frac{\log^2n}{4n}.
\]
\end{lemma}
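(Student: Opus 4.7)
Fix $\e\in(0,1)$ and set $T=(1-\e)\log^2 n/(4n)$; I aim to show
\[
\Pr\bigl[\,\exists P\in\cP:\ w(P)c(P)\le T\,\bigr]=o(1),
\]
from which the lemma follows by letting $\e\to 0$ sufficiently slowly. The tool is a first-moment union bound over $P$, parametrised by the hop count $k=|P|$ and by a geometric grid approximating the pair $(w(P),c(P))$.

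\textbf{Preliminary (range restriction).} Writing $S_k$ for a sum of $k$ i.i.d.\ uniforms on $[0,1]$, a crude first moment using the Irwin--Hall tail $\Pr[S_k\le s]\le s^k/k!$ and the fact that there are at most $n^{k-1}$ paths from $1$ to $2$ of hop count $k$ shows that w.h.p.\ every $P\in\cP$ satisfies $w(P),c(P)\in[n^{-2},n]$; indeed $\sum_k n^{k-1}(n^{-2})^k/k!=o(1)$.

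\textbf{Discretisation and core estimate.} Fix a small $\delta>0$ (chosen at the end) and consider the geometric grid $t_i=(1+\delta)^i$, $u_j=(1+\delta)^j$ in $[n^{-2},n]$, giving $O(\delta^{-2}\log^2 n)$ pairs. If the violating event occurs, then for some triple $(k,i,j)$ there is a path $P$ with $|P|=k$, $w(P)\le t_i$, $c(P)\le u_j$, and $t_iu_j\le \alpha\log^2 n/(4n)$, where $\alpha:=(1+\delta)^2(1-\e)$. For each such $(k,t,u)$, independence of $w,c$, the Irwin--Hall tail applied twice, and Stirling $(k!)^2\ge (k/e)^{2k}$ give
\[
\E\bigl|\{P\in\cP:|P|=k,\,w(P)\le t,\,c(P)\le u\}\bigr|\ \le\ \frac{n^{k-1}(tu)^k}{(k!)^2}\ \le\ \frac{1}{n}\!\left(\frac{ne^2\,tu}{k^2}\right)^{\!k}.
\]
For $tu\le\alpha\log^2 n/(4n)$ this equals $n^{-1}\phi(k)$ with $\phi(k):=(\sqrt{\alpha}\,e\log n/(2k))^{2k}$.

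\textbf{Optimisation in $k$ and conclusion.} A direct calculus check shows $\phi$ is unimodal with maximum $\phi(k^*)=n^{\sqrt{\alpha}}$ attained at $k^*=\sqrt{\alpha}\log n/2$; moreover $\phi(k)\le 1$ for $k\ge e k^*$, and $\phi(k)\le e^{-2k}$ once $k\ge e^2 k^*$, so $\sum_{k\ge 1}\phi(k)=O(\log n)\,n^{\sqrt{\alpha}}$. Summing over the $O(\delta^{-2}\log^2 n)$ grid pairs yields a total union bound of $O(\delta^{-2}\log^3 n)\cdot n^{\sqrt{\alpha}-1}$, which is $o(1)$ provided $\sqrt{\alpha}<1$; choosing $\delta$ small enough that $(1+\delta)^2(1-\e)<1$ achieves this.

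\textbf{Main obstacle.} The estimate is razor-thin: the constant $\tfrac14$ in the lemma comes from the exact balance of $n^{k-1}$ paths, the Irwin--Hall tails $(tu)^k/(k!)^2$, and Stirling at $k=k^*$, leaving an exponent of $n$ that is only $\sqrt{\alpha}-1$. Consequently the discretisation loss $(1+\delta)^2$ must be strictly less than $(1-\e)^{-1}$, and Stirling must be used sharp; any cruder bound or coarser grid would push $\sqrt{\alpha}$ past $1$ and cause the union bound to fail. No individual inequality is delicate, but their composition leaves essentially no slack.
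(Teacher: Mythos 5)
Your argument is correct, and it differs from the paper's in the one step that makes this lemma nontrivial: how to handle the \emph{product} constraint $w(P)c(P)\le t$ for a single path. The paper proves a closed-form tail bound for the product of two independent Irwin--Hall sums (its Lemma \ref{lm:ST}, via the layer-cake representation $T^{-k}=\int_0^\infty k u^{-k-1}\ind{u>T}\,\dd u$), and then runs a single first-moment sum over hop counts; you instead decouple the product by restricting $(w(P),c(P))$ to $[n^{-2},n]^2$ (your preliminary step, which the paper does not need) and taking a union bound over a multiplicative grid of rectangles $\{w\le t_i,\,c\le u_j\}$ with $t_iu_j\le(1+\delta)^2(1-\e)\frac{\log^2 n}{4n}$. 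From there the two proofs coincide: the same $n^{k-1}(tu)^k/(k!)^2$ count, Stirling, and optimization at $k^*\approx\frac{\sqrt{\alpha}}{2}\log n$ produce the exponent $\sqrt{\alpha}-1$, which matches the paper's $\D=2\sqrt{\b}-1$ under $\b=\alpha/4$, so the critical constant $\tfrac14$ appears identically. What each route buys: the paper's product-tail lemma costs only a factor $k\log(k^2/t)$ rather than your $O(\delta^{-2}\log^2 n)$ grid pairs, avoids the range restriction, and serves as a template that generalizes directly to the $U^\g$ case (its Lemma \ref{lm:STa}); your discretization is more elementary, and since both overheads are polylogarithmic against the exponential savings $n^{\sqrt{\alpha}-1}$, nothing is lost. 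One small point to tighten: rather than invoking ``$\e\to 0$ sufficiently slowly'' abstractly, note that your union bound is fully explicit in $\e,\delta$, so you can simply take, say, $\e=(\log n)^{-1/2}$ and $\delta=\e/10$, giving $\sqrt{\alpha}-1\le -c\,\e$ and a bound $O(\e^{-2}\log^3 n)\,e^{-c\sqrt{\log n}}=o(1)$; this mirrors the paper's choice $\b=\frac14\brac{1-(\log n)^{-1/2}}^2$ and yields the stated $\gtrsim$ directly, without a diagonalization step.
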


\begin{corollary}\label{cor:Ln-low-bd}
For every $c_o$, $L_n \gtrsim \frac{\log^2n}{4nc_0}$ w.h.p.
\end{corollary}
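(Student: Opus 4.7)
The corollary is immediate from Lemma \ref{lm:wc}: for any $P$ with $c(P) \le c_0$, one has $w(P) \ge w(P)c(P)/c_0$, so applying the lemma uniformly over all feasible $P$ gives $L_n \gtrsim \log^2 n/(4nc_0)$. The substantive content is the lemma itself, which I would prove by a first-moment / union-bound argument over all paths in $\cP$, grouped by their number of edges.

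Fix $\e>0$ and set $y = (1-\e)\log^2 n/(4n)$. There are at most $n^{k-1}$ paths from vertex $1$ to vertex $2$ with $k$ edges. For any such $P$, $w(P)$ and $c(P)$ are independent sums of $k$ i.i.d.\ $U[0,1]$ variables (both Irwin--Hall distributed). Using the simplex-volume estimate $\Pr[\sum_{i=1}^k U_i \le t] \le t^k/k!$ together with the pointwise density bound $f(t) \le t^{k-1}/(k-1)!$, one conditions on $c(P)$ and integrates to get
\[
\Pr[w(P)\,c(P) \le y] \;\le\; \int_0^{k} \min\!\left(\frac{(y/c)^k}{k!},\,1\right)\!\frac{c^{k-1}}{(k-1)!}\,dc \;\lesssim\; \frac{y^k \log(k^2/y)}{k!\,(k-1)!},
\]
the logarithm coming from $\int c^{-1}\,dc$ in the regime $c \ge ey/k$ where the $(y/c)^k/k!$ bound is the active one. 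Combining with the path count and applying Stirling's formula to $k!(k-1)!$, the expected number of length-$k$ paths with $w(P)c(P) \le y$ is at most
\[
\frac{\log n}{n}\!\left(\frac{(1-\e)\,e^2\log^2 n}{4k^2}\right)^{k}
\]
up to a bounded factor.

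It remains to sum this over $k$ and show the total is $o(1)$. The logarithm of the bound is concave in $k$ and maximized at $k^\ast = \tfrac12 \sqrt{1-\e}\,\log n$, which reassuringly sits just below the claimed hop-count $H_n \approx \tfrac12\log n$ of Theorem \ref{th1}. At $k^\ast$ the bound evaluates to $\tfrac{\log n}{n}\cdot n^{\sqrt{1-\e}}$; the second derivative at $k^\ast$ being of order $1/\log n$, Gaussian decay around the peak means that summing over nearby $k$ costs only a factor $O(\sqrt{\log n})$, while terms far from $k^\ast$ are negligible. The total expected count is therefore $O(n^{\sqrt{1-\e}-1}\log^{3/2} n) = o(1)$ for every $\e>0$, which is exactly what $\gtrsim \log^2 n/(4n)$ requires, and Markov's inequality then finishes the lemma. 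The main delicacy I anticipate is keeping the integral estimate sharp enough to produce the exponent $\sqrt{1-\e}$ rather than something worse — it is precisely this square root, inherited from balancing the two $k$-fold sums, that pins the threshold at $\log^2 n/(4n)$ with optimal hop-count $\sim\tfrac12\log n$.
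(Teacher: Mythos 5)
Your proposal is correct and follows essentially the same route as the paper: the corollary is deduced from the product bound $w(P)c(P)\gtrsim \frac{\log^2 n}{4n}$ by the identical one-line inequality, and your first-moment proof of that bound (conditioning on $c(P)$ with the density bound $c^{k-1}/(k-1)!$ gives the same tail estimate $\frac{y^k\log(k^2/y)}{k!\,(k-1)!}$ as the paper's Lemma on $\Pr(ST\le t)$, then union bound over $n^{k-1}$ paths and optimization at $k^*=\frac12\sqrt{1-\e}\log n$ matches the paper's $\ell=\a\log n$, $\D\le 2\sqrt{\b}-1$ computation, with fixed $\e$ in place of the paper's $\b=\frac14(1-(\log n)^{-1/2})^2$). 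The only blemishes are harmless bookkeeping: your prefactor is really of order $k\log(k^2/y)\sim\log^2 n$ rather than $O(\log n)$, and formally the ``$\gtrsim$'' statement needs the standard passage from ``for every fixed $\e>0$ w.h.p.'' to a single $o(1)$ sequence; neither affects the polynomially small bound $n^{\sqrt{1-\e}-1}$ up to polylogarithmic factors.
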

\begin{proof}
We have, $L_n = \min\{w(P): \ P \in \cP, c(P) \leq c_0\} \geq \min\left\{w(P)\frac{c(P)}{c_0}: \ P \in \cP, c(P) \leq c_0\right\} \gtrsim \frac{\log^2n}{4nc_0}$.
\end{proof}

For the proof of Lemma \ref{lm:wc}, we need a bound on events that for a fixed path $P$, we have $w(P)c(P) \leq t$.

\begin{lemma}\label{lm:ST}
Let $S$ and $T$ be independent copies of $U_1 + \dots + U_k$, where $U_1, \ldots, U_k$ are i.i.d. copies of a uniform random variable on $[0,1]$. Then for $0 < t < k^2$, we have
\[
\Pr\brac{ST \leq t} \leq \frac{t^k}{k!^2}\left(k\log\left(\frac{k^2}{t}\right) + 2\frac{k!}{k^k}\right).
\]
\end{lemma}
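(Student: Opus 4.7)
The plan is to condition on $S$ and use the elementary tail bound
\[
F_{S_k}(s) := \Pr(U_1 + \cdots + U_k \leq s) \leq \frac{s^k}{k!}, \qquad s \geq 0,
\]
valid for all nonnegative $s$, not just $s \leq 1$. I would first establish this by induction on $k$: the base case $k=1$ reduces to $\min(s,1)\leq s$, and for the inductive step conditioning on $U_k$ gives $F_{S_k}(s) = \int_0^{\min(s,1)} F_{S_{k-1}}(s-u)\,du \leq \int_0^{\min(s,1)} (s-u)^{k-1}/(k-1)!\,du$, which evaluates to $s^k/k!$ when $s \leq 1$ and to $(s^k-(s-1)^k)/k! \leq s^k/k!$ when $s > 1$.

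With this bound in hand, let $f_S$ denote the density of $S$ on $[0,k]$ and condition on $S=s$:
\[
\Pr(ST \leq t) = \int_0^k f_S(s)\,\Pr(T \leq t/s)\,ds.
\]
The hypothesis $t < k^2$ guarantees $t/k < k$, so I would split the integral at $s = t/k$. On $[0, t/k]$ we have $t/s \geq k$ and hence $\Pr(T\leq t/s)=1$, contributing exactly $F_S(t/k) \leq (t/k)^k/k! = t^k/(k!\,k^k)$. On $[t/k, k]$ the bound $\Pr(T\leq t/s)\leq (t/s)^k/k!$ controls the remaining contribution by $(t^k/k!)\int_{t/k}^k s^{-k}\,dF_S(s)$.

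The main step is estimating $I := \int_{t/k}^k s^{-k}\,dF_S(s)$ by integration by parts,
\[
I = k^{-k}F_S(k) - (t/k)^{-k}F_S(t/k) + k\int_{t/k}^k F_S(s)\,s^{-k-1}\,ds.
\]
Dropping the nonpositive boundary term, using $F_S(k)=1$, and inserting $F_S(s)\leq s^k/k!$ in the remaining integral yields
\[
I \leq \frac{1}{k^k} + \frac{k}{k!}\int_{t/k}^k \frac{ds}{s} = \frac{1}{k^k} + \frac{k}{k!}\log\frac{k^2}{t}.
\]
Assembling the two contributions produces
\[
\Pr(ST \leq t) \leq \frac{t^k}{k!\,k^k} + \frac{t^k}{k!}\brac{\frac{1}{k^k} + \frac{k}{k!}\log\frac{k^2}{t}} = \frac{t^k}{(k!)^2}\brac{\frac{2k!}{k^k} + k\log\frac{k^2}{t}},
\]
which is exactly the stated bound.

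No serious obstacle is anticipated: the explicit Irwin--Hall density is not needed, nor is any large-deviation machinery. The only technical care required is establishing the crude bound $F_S(s)\leq s^k/k!$ on the full range $[0,k]$ (not only on $[0,1]$, where it is an equality), which the short induction above handles. The hypothesis $t < k^2$ plays two roles: it places the split point $t/k$ inside the support $[0,k]$, and it makes $\log(k^2/t)$ positive.
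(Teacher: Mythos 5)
Your proof is correct and follows essentially the same route as the paper: both condition on one factor, split at the value $t/k$, invoke the simplex bound $\Pr(S\leq x)\leq x^k/k!$ (valid for all $x\geq 0$), and reduce the negative-moment term $\E\brac{S^{-k}\ind{S>t/k}}$ to an integral of the distribution function against $s^{-k-1}$. Your integration by parts is just the paper's layer-cake identity $T^{-k}=\int_0^\infty ku^{-k-1}\ind{u>T}\,\dd u$ in different notation, so the two arguments coincide.
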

\begin{proof}
We have $\Pr\brac{S \leq x} \leq \frac{x^k}{k!}$ for every $x \geq 0$ (e.g. by looking at the volume of the orthogonal simplex of side-length $x$). Moreover, $S \leq k$, so $\Pr\brac{S \leq x} = 1$ for every $x \geq k$. Therefore, using independence,
\begin{align}
\Pr\brac{ST \leq t} = \E_T\Pr_S\brac{S \leq \frac{t}{T}} &= \E_T\brac{\Pr_S\brac{S \leq \frac{t}{T}}\ind{\frac{t}{T} < k}} +  \E_T\ind{\frac{t}{T} \geq k} \nonumber\\
&\leq \E_T\brac{\frac{1}{k!}\left(\frac{t}{T}\right)^k\ind{T >\frac{ t}{k}}} + \Pr_T\brac{T \leq \frac{t}{k}}.\label{eq1}
\end{align}
For the second term, we use again $\Pr_T\brac{T \leq \frac{t}{k}} \leq \frac{1}{k!}\left(\frac{t}{k}\right)^k$, whereas for the first one, writing 
\[
T^{-k} = \int_0^\infty ku^{-k-1}\ind{u > T}\dd u\]
yields
\begin{align*}
\E_T\brac{\frac{1}{k!}\left(\frac{t}{T}\right)^k\ind{T > \frac{t}{k}}} &= \frac{t^k}{k!}\E_T\brac{\int_{0}^\infty ku^{-k-1}\ind{T > \frac{t}k, T < u} \dd u} \\
&\leq \frac{t^k}{k!}\int_{t/k}^\infty ku^{-k-1}\Pr\brac{T < u} \dd u \\
&\leq \frac{t^k}{k!}\left(\int_{t/k}^k ku^{-k-1}\frac{u^k}{k!} \dd u + \int_{k}^\infty ku^{-k-1} \dd u\right) \\
&= \frac{t^k}{k!^2}\left(k\log\left(\frac{k^2}{t}\right) + \frac{k!}{k^k}\right).
\end{align*}
Putting these together finishes the proof.
\end{proof}

\begin{proof}[Proof of Lemma \ref{lm:wc}]
For constant $\b>0$ to be chosen soon, we let
\[
\cP_\b=\set{P\in \cP:w(P)c(P)\leq \frac{\b\log^2n}{n}}.
\]
Then,
\beq{0}{
\E(|\cP_\b|)\leq\sum_{1 \leq \ell \leq n}n^{\ell-1}\frac{1}{\ell!^2}\bfrac{\b\log^2n}{n}^\ell \left(\ell\log\left(\frac{\ell^2n}{\b\log^2n}\right) + 2\frac{\ell!}{\ell^\ell}\right).
}
Explanation: we choose the $\ell-1$ internal vertices and order them in $\binom{n}{\ell-1}(\ell-1)!\leq n^{\ell-1}$ ways to create a path $P$ of edge-length $\ell$. We then use Lemma \ref{lm:ST} to bound the probability that $w(P)c(P) \leq \frac{\b\log^2n}{n}$, i.e. $P \in \cP_\beta$.

Let $u_\ell$ denote the summand in \eqref{0}. Note that for large enough $n$,
\[
\ell\log\left(\frac{\ell^2n}{\b\log^2n}\right) + 2\frac{\ell!}{\ell^\ell} \leq \ell\log\left(\frac{n^3}{\b\log^2n}\right) + 2 \leq 3\ell\log n.
\]
Using $\ell! \geq (\ell/e)^\ell$, putting $\ell = \alpha \log n$ and $\D=\a(\log(\b/\a^2)+2)-1$, we have
\[
n^{\ell-1}\frac{1}{\ell!^2}\bfrac{\b\log^2n}{n}^\ell = e^{\D\log n}.
\]
Looking at $\frac{\partial \D}{\partial\a} = \log \frac{\beta}{\a^2}$, we see that for a fixed $\beta$, $\D$ is maximized when $\alpha = \sqrt{\beta}$, giving $\D \leq 2\sqrt{\beta} - 1$. Therefore, we choose $\beta$ such that, say $2\sqrt{\beta} - 1 = -(\log n)^{-1/2}$, that is
\[
\beta = \frac{1}{4}\left(1 - \frac{1}{\sqrt{\log n}}\right)^2.
\]
Then
\[
\sum_{1 \leq \ell \leq \log^2n} u_\ell \leq \sum_{1 \leq \ell \leq \log^2n} e^{-\sqrt{\log n}}\cdot 3\ell \log n \leq 3\log^5ne^{-\sqrt{\log n}} = o(1).
\]
It remains to note that for $\ell > \log^2n$,
\[
n^{\ell-1}\frac{1}{\ell!^2}\bfrac{\b\log^2n}{n}^\ell \leq n^{-1} \left(\frac{e^2\b\log^2n}{\ell^2}\right)^\ell \leq n^{-1}\left(\frac{e^2\b\log^2n}{\log^4n}\right)^\ell \leq n^{-1}e^{-\ell} \leq n^{-1}e^{-\log^2n},
\]
thus
\[
\sum_{\log^2n < \ell \leq n} u_\ell \leq \sum_{\log^2n < \ell \leq n} n^{-1}e^{-\log^2n}\cdot 3\ell\log n \leq 3n(\log n) e^{-\log^2n} = o(1).
\]
We conclude that $\Pr\brac{|\cP_\b|>0} \leq\E|\cP_\b|=o(1)$ with $\beta = \frac{1}{4}\left(1 - \frac{1}{\sqrt{\log n}}\right)^2$.
\end{proof}

\begin{Remark}\label{rem:easier}
Corollary \ref{cor:Ln-low-bd} can also be obtained by a much simpler first moment argument counting the number of paths $P$ such that $w(P) \leq w$ and $c(P) \leq c$ with $wc \gtrsim \frac{\log^2n}{4n}$ (see \cite{FPT}). Lemma \ref{lm:wc} will be crucial however in Section \ref{s3}.
\end{Remark}

\section{The dual}\label{s2}

In this section, motivated by the Lagrangean dual to CSP, we consider the following random variable
\[
\psi(\l)=\min\set{w(P)+\l c(P):\;P\in \cP},
\]
where $\lambda$ is a positive parameter chosen later.
In words, $\psi(\lambda)$ is the the minimum length of a path from 1 to 2 in $K_n$ when edge lengths are independent copies of $W=W(\l)=U_1+\l U_2$. Here $U_1,U_2$ are independent copies of $U$. The goal of this section is to establish the following upper bound on $\psi(\lambda)$ for a specific \emph{optimal} choice of $\lambda$ (see Remark \ref{rem:l*} below).

\begin{lemma}\label{lm:psi-upp-bd}
Let $c_0$ satisfy \eqref{eq:c0-assum}. Let $\lambda^* = \frac{\log^2n}{4c_0^2n}$. Then
\begin{equation}\label{eq:psi-upp-bd}
\psi(\lambda^*) \lesssim \frac{\log^2n}{2c_0n} \qquad \text{w.h.p.}
\end{equation}
\end{lemma}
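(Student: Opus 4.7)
The quantity $\psi(\l^*)$ is the length of the shortest path from $1$ to $2$ in $K_n$ under i.i.d.\ edge weights distributed as $W = U_1 + \l^* U_2$ with $U_1, U_2$ independent $U[0,1]$. A direct integration gives
\[
F_W(x) = \frac{x^2}{2\l^*}, \qquad 0 \le x \le \min(1,\l^*),
\]
and under \eqref{eq:c0-assum} the target scale $L^* := \frac{\log^2 n}{2 c_0 n}$ lies comfortably below $\min(1,\l^*)$, so this quadratic formula covers every weight that enters the argument. My plan is to apply the second moment method to the count
\[
X := \#\set{P \in \cP \,:\, |P| = \ell_0,\ W(P) \le (1+\eta) L^*},
\]
with $\ell_0 = \rdown{\half \log n}$ and $\eta = \eta(n)\to 0$ slowly, e.g.\ $\eta = (\log n)^{-1/2}$.

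For the first moment, I would compute $\Pr(W(P) \le L)$ for a fixed length-$\ell_0$ path by a convolution argument analogous to the one in Lemma~\ref{lm:ST} (but for the weighted sum $w(P)+\l^* c(P)$ of independent $\ell_0$-fold uniform convolutions, rather than the product $w(P)c(P)$). The outcome is
\[
\Pr(W(P) \le L) \approx \frac{L^{2\ell_0}}{(\l^*)^{\ell_0} (2\ell_0)!}.
\]
Summing over the $(1+o(1))\,n^{\ell_0-1}$ choices of path and plugging in $\l^* = \frac{\log^2n}{4 c_0^2 n}$, $L=(1+\eta)L^*$ together with Stirling's formula on $(2\ell_0)!\approx (\log n)!$, a bookkeeping identical in spirit to the end of Section~\ref{s1} gives $\log \E[X] = \eta \log n \cdot (1+o(1)) \to \infty$.

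For the second moment, I would write
\[
\E[X^2] = \sum_{P_1,P_2} \Pr\bigl(W(P_1) \le L,\ W(P_2) \le L\bigr)
\]
and split the sum by the shared edge set $S = E(P_1) \cap E(P_2)$. Edge-disjoint pairs contribute $(1+o(1))\E[X]^2$ by independence. For pairs with $|S|=k\ge 1$, decompose $W(P_i) = W(S) + W(P_i\setminus S)$; the three sums are independent, so conditioning on $W(S)=t$ reduces the joint probability to $\Pr(\text{length-}(\ell_0-k)\text{ segment has weight}\le L-t)^2$, which the first-moment step already estimates. Combined with a count of pairs classified by the connected-component structure of $P_1\cap P_2$ inside $P_1$, the aim is to show that the total contribution of pairs with $k\ge 1$ is $o(\E[X]^2)$. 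Then Paley--Zygmund gives $\Pr(X>0)=1-o(1)$, so $\psi(\l^*) \le (1+\eta)L^*$ w.h.p., and letting $\eta\to 0$ yields $\psi(\l^*)\lesssim L^*$.

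The main obstacle I anticipate is the second-moment enumeration: the intersection of two paths of length $\half\log n$ can split into many short subpaths, and both the count of such pairs and the joint probability depend sensitively on this decomposition. A plausible alternative that sidesteps the enumeration is a two-sided Dijkstra-style exploration from $1$ and $2$ whose reached sets of size $\sim\sqrt n$ meet at a shared edge in the middle; however the lack of memorylessness of $W$, in contrast to the exponential case, forces more delicate tracking of the edge-weight increments.
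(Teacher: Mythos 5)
Your route (a second-moment count of near-optimal paths of hop length $\tfrac12\log n$) is genuinely different from the paper's, which avoids second moments altogether: the paper notes $\Pr(W\le t)=\frac{t^2}{2\lambda^*}\ge 1-e^{-t^2/(2\lambda^*)}$, upgrades this to stochastic domination of $W$ by $(2\lambda^*\tilde\xi)^{1/2}$ for a \emph{truncated} exponential $\tilde\xi$ (this truncation is exactly where the hypothesis \eqref{eq:c0-assum} enters, via Lemma \ref{lm:trun}), and then invokes the Bhamidi--van der Hofstad result (Theorem \ref{thm:BH} with $s=\tfrac12$) to get $\psi(\lambda^*)\le\sqrt{2\lambda^*}\,\tilde L_{1/2,n}\approx\sqrt{\lambda^*}\,\frac{\log n}{\sqrt n}=\frac{\log^2 n}{2c_0 n}$. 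Your first-moment computation is correct ($\E[X]=e^{(\eta+o(1))\log n}\to\infty$ with your normalisations), but the second-moment step has a genuine gap, and it is not merely the bookkeeping difficulty you anticipate: the statement you aim for, that pairs with $k\ge1$ shared edges contribute $o(\E[X]^2)$, is false.

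Concretely, consider ordered pairs $(P_1,P_2)$ sharing only the first edge $\{1,v\}$; there are about $n^{2\ell_0-3}$ of them. Since $W$ has density $t/\lambda^*$ near $0$, the joint probability for such a pair is $\int_0^L\frac{t}{\lambda^*}\Bigl(\frac{(L-t)^{2\ell_0-2}}{(\lambda^*)^{\ell_0-1}(2\ell_0-2)!}\Bigr)^2 \dd t=\frac{L^{4\ell_0-2}}{(\lambda^*)^{2\ell_0-1}((2\ell_0-2)!)^2(4\ell_0-3)(4\ell_0-2)}$, and dividing by $\E[X]^2\approx n^{2\ell_0-2}L^{4\ell_0}(\lambda^*)^{-2\ell_0}((2\ell_0)!)^{-2}$ gives a ratio $\approx\frac{\lambda^*\ell_0^2}{nL^2}=\frac{\ell_0^2}{(1+\eta)^2\log^2 n}\to\frac14$. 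Adding the symmetric contribution at vertex $2$ and longer shared initial/final segments, one gets $\E[X^2]\ge(1+c)\E[X]^2$ for a constant $c>0$, so Paley--Zygmund only yields $\Pr(X>0)\ge 1/(1+c)$, a constant bounded away from $1$, not $1-o(1)$; indeed $X$ is genuinely non-concentrated, being inflated by the fluctuating cheapest edges incident to the endpoints $1$ and $2$ (middle-edge sharings, by contrast, really are negligible). To rescue a second-moment argument you would need to neutralise this endpoint effect, e.g.\ by conditioning on the edges at $1$ and $2$ or by counting paths between many intermediate vertex pairs before attaching the endpoints; your bidirectional-exploration fallback is essentially the analysis carried out in \cite{BH}, which the paper simply imports as a black box after the stochastic-domination step. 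As written, the proposal does not establish the w.h.p.\ bound \eqref{eq:psi-upp-bd}.
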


We shall need the following result of Bahmidi and van der Hofstadt from \cite{BH}.
\begin{theorem}[\cite{BH}]\label{thm:BH}
Let $s \in (0,1)$ be a constant. Let $L_{s,n}$ be the length of a shortest path from $1$ to $2$ in the complete graph $K_n$ when edge lengths are independent copies of $\xi^s$, where $\xi$ is an exponential mean $1$ random variable. We have the following convergence in distribution
\beq{BaHo}{
n^s L_{s,n}-\frac{1}{\G(1+1/s)^s}\log n\stackrel{d}{\longrightarrow} Z,
}
for some random variable $Z$. Moreover, for the hop-count $H_{s,n}$,
\[
H_{s,n}\approx s\log n\qquad\text{w.h.p.}
\]
\end{theorem}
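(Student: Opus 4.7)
The plan is to recast the shortest-path problem as first-passage percolation on $K_n$ and to analyze it by coupling a Dijkstra-type exploration from vertex $1$ (and independently from vertex $2$) with a suitable continuous-time branching process (CTBP). The first step is a lazy BFS/Dijkstra exploration: starting from vertex $1$, maintain the explored set $B(t)$ of vertices within weighted distance $t$, revealing the edge weights only as needed. As long as $|B(t)|=o(n^{1/2})$, the $n-1-|B(t)|\sim n$ edges from each newly discovered vertex are unused and i.i.d.; this lets us couple $B(\cdot)$ with a CTBP in which each individual, at birth, produces $\sim n$ children with i.i.d.\ birth-delays distributed as $\xi^s$, $\xi \sim \text{Exp}(1)$.

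The second step is to compute the Malthusian parameter $\a$ of this CTBP, defined by $n\,\phi(\a)=1$ with $\phi(\a)=\E[e^{-\a \xi^s}]$. The substitution $u=\a t^s$ gives, for large $\a$,
\[
\phi(\a)=\int_0^\infty e^{-\a t^s-t}\,\dd t \sim \frac{1}{s\,\a^{1/s}}\int_0^\infty u^{1/s-1}e^{-u}\,\dd u = \frac{\G(1+1/s)}{\a^{1/s}},
\]
so $\a=\a_n\sim n^s\G(1+1/s)^s$. By classical Nerman--Jagers theory, on non-extinction $|B(t)|\sim W e^{\a t}$ almost surely, where $W=\lim_t e^{-\a t}|B(t)|$ is a strictly positive and finite random variable (the $L\log L$ condition for the offspring distribution is easy to check).

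The third step is the meeting argument. Run two independent copies $B_1(\cdot),B_2(\cdot)$ from vertices $1$ and $2$, with martingale limits $W_1,W_2$, grown up to time $T$ where $|B_i(T)|$ is still $o(n)$ but approaches $\sqrt n$. The first edge connecting $B_1$ to $B_2$ appears, by a birthday-paradox-type calculation on the $|B_1(T)|\cdot|B_2(T)|$ independent unrevealed edges between them, at a time $T^*$ satisfying
\[
\a\,T^*-\tfrac12\log n \stackrel{d}{\longrightarrow} -\tfrac12\log(W_1W_2E),
\]
where $E$ is an independent Exp(1) tracking the weight of the joining edge. The final path has length $L_{s,n}=2T^*+o(\a^{-1})$ w.h.p., so
\[
\a L_{s,n}-\log n \stackrel{d}{\longrightarrow} -\log(W_1W_2E),
\]
which, after dividing by $\G(1+1/s)^s$, gives the stated convergence with $Z=-\G(1+1/s)^{-s}\log(W_1W_2E)$. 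For the hop-count, in the CTBP the generation of a typical individual born at time $t$ grows linearly at rate $1/\E_{\hat\mu}[\xi^s]$ where $\hat\mu$ is the Malthusian size-biased offspring measure; a direct computation gives
\[
\E_{\hat\mu}[\xi^s]=-\phi'(\a)/\phi(\a)=\tfrac{1}{s\a},
\]
so the generation rate is $s\a$, and along a path of length $\sim \log n/\a$ the total number of edges is $\sim s\log n$ w.h.p.

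The main technical obstacle is the coupling between the Dijkstra exploration and the CTBP once the explored set grows beyond polylogarithmic size: since $\xi^s$ is not memoryless for $s<1$, one cannot use the usual exponential-race trick and must instead argue via a careful thinning / monotone coupling with a slightly inflated offspring rate, then show the discrepancy is negligible on the time scale $\a^{-1}\log n$. A secondary difficulty is making the meeting argument quantitative: one needs a tightness estimate for $|B_i(T)|e^{-\a T}$ uniformly in $T$ (standard but delicate for non-Markovian CTBPs), and a precise description of the distribution of the weight on the single linking edge, which conditional on the meeting event is biased toward small values in a way captured by the extra Exp(1) variable $E$ above.
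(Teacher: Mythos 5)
This theorem is not proved in the paper at all --- it is imported verbatim from Bhamidi and van der Hofstad \cite{BH}, and your sketch follows exactly the continuous-time branching process / two-tree meeting approach of that reference, with the key computations (Malthusian parameter $\a\sim n^s\G(1+1/s)^s$ from $n\E[e^{-\a\xi^s}]=1$, and hop-count speed $-\phi'(\a)/\phi(\a)\sim\frac{1}{s\a}$ giving $H_{s,n}\approx s\log n$) carried out correctly. The genuinely hard steps you defer (the coupling beyond polylogarithmic size for non-memoryless weights, and tightness of $e^{-\a T}|B_i(T)|$) are precisely where the bulk of \cite{BH} lives, so your outline is a faithful summary of the cited proof rather than an alternative to anything in this paper.
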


In fact, we shall only need the following simple consequence of \eqref{BaHo}.

\textbf{Claim.} $L_{s,n} \approx \frac{1}{\G(1+1/s)^s}\frac{\log n}{n^s}$ w.h.p.

\begin{proof}
For every sequence of numbers $a_n \to 0$, from \eqref{BaHo}, we get
\[
a_n\left(n^s L_{s,n}-\frac{1}{\G(1+1/s)^s}\log n\right) \to 0 \qquad \text{in probability}.
\]
Choosing, say $a_n = (\log n)^{-1/2}$, we get
\[
\Pr\brac{(\log n)^{-1/2}\left|n^s L_{s,n}-\frac{1}{\G(1+1/s)^s}\log n\right| > 1} \to 0,
\]
or, in other words,
\[
L_{s,n} = \frac{\log n}{\G(1+1/s)^sn^s}\Big[1+\theta_n(\log n)^{-1/2}\G(1+1/s)^s\Big]
\]
with $\Pr(|\theta_n| < 1) \to 1$.
\end{proof}

Heuristically, the idea is that the density of $W(\lambda)$ near the origin behaves like the density of $(2\l\xi)^{1/2}$, hence $\psi(\lambda)$ is asymptotic to $(2\l)^{1/2}L_{1/2,n}$ whose asymptotic behaviour is in turn governed by \eqref{BaHo}. 
To make this rigorous, we need the following lemma.

\begin{lemma}\label{lm:trun}
Let $L_{1/2,n}$, $\tilde L_{1/2,n}$ be the length of a shortest path from $1$ to $2$ in the complete graph $K_n$ when edge lengths are independent copies of $\xi^{1/2}$, $\tilde \xi^{1/2}$ respectively, where $\xi$ is an exponential mean $1$ random variable and
\[
\tilde \xi = \begin{cases} \xi, & \xi \leq \frac{\log^2 n}{n}, \\
\infty, & \xi > \frac{\log^2 n}{n}.  \end{cases}
\]
Then $L_{1/2,n} = \tilde L_{1/2,n}$ w.h.p.
\end{lemma}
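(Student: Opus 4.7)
The plan is to observe that $\tilde L_{1/2,n} \geq L_{1/2,n}$ holds deterministically, since $\tilde\xi \geq \xi$ pointwise, so any path is at least as long under the truncated lengths as under the original ones. It then suffices to establish the reverse inequality w.h.p., which I would do by showing that with high probability no optimal path for $L_{1/2,n}$ uses a ``heavy'' edge $e$, meaning one with $\xi_e > (\log^2 n)/n$.

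First I would invoke the Claim (the simple consequence of Theorem \ref{thm:BH}) with $s = 1/2$, for which $\G(1+1/s) = \G(3) = 2$, to obtain
\[
L_{1/2,n} \lesssim \frac{\log n}{\sqrt{2n}} \qquad \text{w.h.p.}
\]
Next, suppose some minimizer $P^*$ of $L_{1/2,n}$ contains an edge $e$ with $\xi_e > (\log^2 n)/n$. Since edge lengths are non-negative,
\[
L_{1/2,n} = \sum_{f \in P^*} \xi_f^{1/2} \geq \xi_e^{1/2} > \frac{\log n}{\sqrt{n}},
\]
and the factor $1/\sqrt 2 < 1$ produces a constant multiplicative gap with the previous upper bound, a contradiction w.h.p. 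Thus, w.h.p. no minimizer $P^*$ contains a heavy edge, and in particular such a $P^*$ is a valid path under the truncated weights $\tilde\xi^{1/2}$ with unchanged total length, giving $\tilde L_{1/2,n} \leq L_{1/2,n}$.

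I do not foresee any real obstacle: the truncation threshold $(\log^2 n)/n$ is calibrated exactly so that the induced edge-length cutoff $(\log n)/\sqrt n$ dominates the typical shortest-path length $(\log n)/\sqrt{2n}$ by a constant multiplicative factor, and this slack is what drives the entire argument. The only point that warrants a line of care is that the contradiction must be drawn for \emph{any} minimizer rather than a fixed one, which is automatic since the lower bound forced by a single heavy edge holds regardless of which $P^*$ attains the minimum.
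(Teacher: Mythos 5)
Your proposal is correct and follows essentially the same route as the paper: invoke the Claim with $s=\tfrac12$ to get $L_{1/2,n}\lesssim \frac{\log n}{\sqrt{2n}}$ w.h.p., note that a heavy edge ($\xi_e>\frac{\log^2 n}{n}$) would alone force length $>\frac{\log n}{\sqrt n}$, so w.h.p. the minimizer uses no truncated edge, and combine this with the trivial inequality $\tilde L_{1/2,n}\geq L_{1/2,n}$. Your write-up just makes the two directions of the comparison slightly more explicit than the paper does.
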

\begin{proof}
Now the claim applied with $s = \frac{1}{2}$ implies that w.h.p. the shortest path from 1 to 2 has length $\lesssim \frac{1}{\sqrt{2}}\frac{\log n}{\sqrt{n}}$ if edge lengths are given by $\xi^{1/2}$. This clearly implies that w.h.p. {\em the} shortest path $P$ from 1 to 2 contains no edge with $\xi^{1/2} > \frac{\log n}{\sqrt{n}}$. Replacing $\xi$ by $\tilde \xi $ can only increase path lengths and by the previous sentence, $P$ will w.h.p. still have the same length. This implies the lemma.
\end{proof}

We proceed with the proof of Lemma \ref{lm:psi-upp-bd}.

\begin{proof}[Proof of Lemma \ref{lm:psi-upp-bd}]
We split the argument into two cases depending on the value of $\l^*$.

\textbf{Case 1.} $\l^* \geq 1$. For $t \leq 1$, we have
\begin{equation}\label{eq:tails}
\Pr(W\leq t) = \Pr(U_1 + \l^*U_2 \leq t) = \frac{t^2}{2\l^*} \geq 1 - e^{-\frac{t^2}{2\l^*}} = \Pr\brac{(2\l^*\xi)^{1/2}\leq t}.
\end{equation}
Observe that for $\tilde \xi$ from Lemma \ref{lm:trun}, we have 
\[
\Pr\brac{\tilde \xi \leq t} = \begin{cases} \Pr\brac{\xi \leq t}, & t \leq \frac{\log^2n}{n}, \\ \Pr\brac{\xi \leq \frac{\log^2n}{n}}, & t > \frac{\log^2n}{n},  \end{cases}
\]
for every $t \geq 0$. Therefore, the following comparison holds
\begin{equation}\label{eq:tails'}
\Pr(W\leq t)  \geq \Pr\brac{(2\l^*\tilde \xi)^{1/2}\leq t},
\end{equation}
for every $t \geq 0$ as long as $\frac{1}{2\l^*} \geq \frac{\log^2n}{n}$, equivalently $c_0 \geq \frac{\log^2n}{2^{1/2}n}$, which is assumed in \eqref{eq:c0-assum}.
This means that $W$ is stochastically dominated by $(2\l^*\tilde \xi)^{1/2}$. As a result,
\[
\psi(\l^*) \leq \sqrt{2\l^*}\tilde L_{1/2,n} = \sqrt{\frac{\log^2n}{2c_0^2n}}L_{1/2,n},
\]
where the equality follows from Lemma \ref{lm:trun}. The claim made after Theorem \ref{thm:BH} gives $L_{1/2,n} \approx \frac{\log n}{\sqrt{2n}}$, which finishes the argument.

\textbf{Case 2.} $\l^* \leq 1$. We repeat the whole argument of Case 1. The only change is that now \eqref{eq:tails} holds for all $t \leq \l^*$ instead of all $t \leq 1$, thus to establish \eqref{eq:tails'} for all $t \geq 0$, we need $\frac{\l^*}{2} \geq \frac{\log^2n}{n}$, equivalently, $c_0 \leq \frac{1}{2\sqrt{2}}$.
\end{proof}

\begin{Remark}\label{rem:c0}
An inspection of the proof shows that the implicit $o(1)$ term in \eqref{eq:psi-upp-bd} does not depend on $c_0$. 
\end{Remark}

\begin{Remark}\label{rem:l*}
The value $\lambda^*$ was chosen so as to minimize $\psi(\lambda) - \lambda c_0$ (the dual lower bound on CSP), where we put $\psi(\lambda) = \sqrt{\lambda}\frac{\log n}{\sqrt{n}}$ (heuristically $\psi(\lambda) \approx \sqrt{2\l}L_{1/2,n} \approx \sqrt{\lambda}\frac{\log n}{\sqrt{n}}$).
\end{Remark}

\section{Proof of Theorem \ref{th1}}\label{s3}

In view of Corollary \ref{cor:Ln-low-bd}, we need to upper bound $L_n$, or in other words, show that w.h.p. there is a path $P$ with $w(P) \leq \frac{log^2n}{4nc_0}$ and $c(P) \leq c_0$.

Let $\l^* = \frac{\log^2n}{4c_0^2n}$. By the definition of $\psi(\l^*)$, we get a path $P$ of length $w=w(P)$ and cost $c=c(P)$ that w.h.p. satisfies
\begin{align}
wc&\gtrsim\frac{\log^2n}{4n}\qquad\text{ from Lemma \ref{lm:wc},}\label{f1}\\
w+\frac{\log^2n}{4c_0^2n}c&\lesssim \frac{\log^2n}{2c_0n}\qquad\text{ from Lemma \ref{lm:psi-upp-bd}}.\label{f2}
\end{align}
The implicit $o(1)$ terms here do not depend on $c_0$ (which is clear for \eqref{f1} and is justified by Remark \ref{rem:c0} for \eqref{f2}). 
Combining \eqref{f1} and \eqref{f2} yields
\begin{align*}
(1-o(1))\frac{\log^2n}{4n} \leq wc \leq \left[\frac{\log^2n}{2c_0n}(1+o(1)) - \frac{\log^2n}{4c_0^2n}c\right]c = \frac{\log^2n}{4nc_0}\left(2(1+o(1))-\frac{c}{c_0}\right)c,
\end{align*}
thus, in terms of $r = \frac{c}{c_0}$,
\[
1-o(1) \leq 2(1+o(1))r - r^2\quad\text{ or }\quad1-\e\leq 2(1+\e)r-r^2
\]
for some $\e=\e(n)\to 0$. Re-arranging gives 
\[
(r-1-\e)^2\leq 3\e+\e^2\text{ and so }c\leq c_0(1+2\e^{1/2}). 
\]
Note now that \eqref{f2} implies that
\beq{f3}{
w\leq \frac{\log^2n}{4c_0n}\brac{2+o(1)-\frac{c}{c_0}}\approx  \frac{\log^2n}{4c_0n}.
}
Now let $\hc_0=c_0(1-2\e^{1/2})$ and repeat the above analysis with $\hc_0$ replacing $c_0$. Then w.h.p. we see that w.h.p. there is a path of length at most $\frac{\log^2n}{4\hc_0n}\approx \frac{\log^2n}{4c_0n}$ and cost at most $\hc_0(1+2\e^{1/2})\leq c_0$. This completes the proof of Theorem \ref{th1}.

\section{More general distributions}\label{Mogen}

The goal is to sketch a proof of Theorem \ref{th2}. We first have to generalise Lemma \ref{lm:ST}. For this we need the following lemma.
\begin{lemma}\label{lema}
Let $\g > 0$. Let $U_1,U_2,\ldots,U_k$ be independent copies of a uniform random variable on $[0,1]$. Then, for $u\geq 0$, we have
\[
\Pr(U_1^\g+U_2^\g+\dots+U_k^\g\leq u)\leq\frac{u^{k/\g}\G\brac{\frac{1}{\g}+1}^k}{\G\brac{\frac{k}{\g}+1}}.
\]
\end{lemma}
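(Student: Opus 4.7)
\textbf{Proof plan for Lemma \ref{lema}.} The plan is to express the probability as an explicit integral using the density of $U_i^{\g}$, enlarge the domain of integration to the nonnegative orthant (which can only increase the integral), and then evaluate the resulting integral via the classical Dirichlet formula.

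Concretely, first I would observe that each $X_i := U_i^{\g}$ has cumulative distribution function $x^{1/\g}$ on $[0,1]$ and hence density $\tfrac{1}{\g}x^{1/\g-1}\mathbf{1}_{[0,1]}(x)$. Writing the probability in question as an integral over $[0,1]^k$ gives
\[
\Pr\brac{U_1^\g+\dots+U_k^\g\leq u}=\int_{\substack{x_i\in[0,1]\\x_1+\dots+x_k\leq u}}\prod_{i=1}^k\frac{1}{\g}x_i^{1/\g-1}\,dx_1\cdots dx_k.
\]
Since the integrand is nonnegative, dropping the constraints $x_i\leq 1$ only increases the integral, so this is bounded by the integral over the simplex $\{x_i\geq 0,\ \sum x_i\leq u\}$.

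Next I would substitute $x_i = u\,y_i$, turning the domain into the standard simplex $\Delta_k=\{y_i\geq 0,\ \sum y_i\leq 1\}$ and producing a factor $u^k\cdot u^{k(1/\g-1)}=u^{k/\g}$. What remains is the Dirichlet integral
\[
\int_{\Delta_k}\prod_{i=1}^k y_i^{1/\g-1}\,dy=\frac{\G(1/\g)^k}{\G(k/\g+1)},
\]
a standard identity (proved by the usual Laplace-transform or induction argument). Putting things together yields the bound
\[
\Pr\brac{U_1^\g+\dots+U_k^\g\leq u}\leq \frac{u^{k/\g}}{\g^k}\cdot\frac{\G(1/\g)^k}{\G(k/\g+1)}.
\]
Finally, the identity $\G(1/\g+1)=(1/\g)\G(1/\g)$ gives $\G(1/\g)/\g=\G(1/\g+1)$, so $\G(1/\g)^k/\g^k=\G(1/\g+1)^k$, matching exactly the claimed bound.

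There is no real obstacle here: the whole argument rests on the standard Dirichlet integral, and the only inequality used is the (trivial) enlargement of the integration domain. If one prefers to avoid quoting the Dirichlet formula, it can be replaced by a short induction on $k$: assuming the bound for $k-1$ variables on the full positive orthant and integrating out $x_k$ against $\tfrac{1}{\g}x_k^{1/\g-1}$ on $[0,u]$ produces the Beta integral $B(1/\g,(k-1)/\g+1)$, which reassembles into $\G(1/\g+1)^k/\G(k/\g+1)$ after using the identity $B(a,b)=\G(a)\G(b)/\G(a+b)$.
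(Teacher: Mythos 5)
Your proof is correct and follows essentially the same route as the paper: both arguments drop the constraint that the variables lie in $[0,1]$, pull out the factor $u^{k/\g}$ by scaling, and evaluate the remaining constant $\G\brac{\tfrac1\g+1}^k/\G\brac{\tfrac k\g+1}$ via a Gamma-function identity. The only (cosmetic) difference is that the paper stays in the original variables and computes the volume of $\set{x\geq 0:\sum x_i^\g\leq 1}$ directly by the exponential trick $\brac{\int_0^\infty e^{-t^\g}dt}^k=\int_0^\infty e^{-s}\,\text{Vol}\set{x\ge 0:\sum x_i^\g<s}\,ds$, whereas you change variables to the densities of $U_i^\g$ and quote the classical Dirichlet integral, whose standard proof is the same computation.
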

\begin{proof}
We have
\begin{align*}
\Pr(U_1^\g+U_2^\g+\dots+U_k^\g \leq u) &=\text{Vol}\set{x\in [0,1]^k:\;\sum_{i=1}^k x_i^\g \leq u} \\
&\leq \text{Vol}\set{x\in [0,\infty)^k:\;\sum_{i=1}^k x_i^\g \leq u} = u^{k/\g}v_{k,\g}
\end{align*}
where $v_{k,\g}=\text{Vol}\set{x\in [0,\infty)^k:\;\sum_{i=1}^k x_i^\g \leq 1}$. A standard computation leads to a closed expression, 
\mults{
\left(\int_0^\infty e^{-t^\g} dt\right)^k = \int_0^\infty\dots\int_0^\infty e^{-x_1^\g-\dots-x_k^\g} dx_1\dots dx_k = \int_0^\infty\dots\int_0^\infty \int_{s > x_1^\g+\dots+x_k^\g} e^{-s} ds dx_1\dots dx_k \\
= \int_{s>0} e^{-s}\text{Vol}\set{x\in [0,\infty)^k:\;\sum_{i=1}^k x_i^\g<s} ds 
= \int_{s>0} e^{-s}s^{k/\g}v_{k,\g} ds, 
}
and thus
\[
v_{k,\g} = \frac{\left(\int_0^\infty e^{-t^\g} dt\right)^k}{\int_{s>0} e^{-s}s^{k/\g}ds} = \frac{\Gamma\brac{\frac{1}{\g}+1}^k}{\Gamma(\frac{k}{\g}+1)}.
\]
\end{proof}
Given this we have
\begin{lemma}\label{lm:STa}
Let $S$ and $T$ be independent copies of $U_1^\g + \dots + U_k^\g$, where $U_1, \ldots, U_k$ are i.i.d. copies of a uniform random variable on $[0,1]$. Then for $0 < t < k^2$, we have
\[
\Pr\brac{ST \leq t} \leq \frac{t^{k/\g}\G\brac{\frac{1}\g+1}^{2k}} {\G\brac{\frac{k}\g+1}^2}\brac{\frac{k}{\g}\log\bfrac{k^2}{t}+ 2\frac{\G\brac{\frac{k}\g+1}}{k^k\G\brac{\frac{1}\g+1}^k}}.
\]
\end{lemma}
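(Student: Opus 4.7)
The argument is a direct adaptation of the proof of Lemma \ref{lm:ST}, with the simplex volume bound $\Pr(U_1 + \dots + U_k \leq x) \leq x^k/k!$ replaced by the tail bound from Lemma \ref{lema}, $\Pr(S \leq x) \leq A x^{p}$, where for brevity I set $A := \Gamma(1/\gamma+1)^k / \Gamma(k/\gamma+1)$ and $p := k/\gamma$; the same inequality holds for $T$. As in the $\gamma = 1$ case, I would also use the fact that $S, T \leq k$ almost surely (each summand lies in $[0,1]$), so $\Pr(S \leq x) = \Pr(T \leq x) = 1$ for $x \geq k$.

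I would then condition on $T$ and split according to whether $t/T$ exceeds $k$, obtaining
\[
\Pr(ST \leq t) \leq A t^{p} \, \E_T\bigl[T^{-p} \ind{T > t/k}\bigr] + \Pr(T \leq t/k),
\]
in perfect analogy with \eqref{eq1}. The only step that needs genuine reworking is the integral representation $T^{-k} = \int_0^\infty k u^{-k-1} \ind{u > T}\,du$ used in the original proof: for non-integer $p$ I would replace it by $T^{-p} = \int_0^\infty p u^{-p-1} \ind{u>T}\,du$, which is valid for any $p > 0$ by direct computation of the antiderivative. After a Fubini swap, the expectation becomes $\int_{t/k}^\infty p u^{-p-1}\Pr(T < u)\,du$.

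Finally, I would split this integral at $u = k$, bounding $\Pr(T < u) \leq A u^{p}$ on $[t/k, k]$ (from Lemma \ref{lema}) and $\Pr(T < u) \leq 1$ on $[k,\infty)$ (from $T \leq k$). The two pieces give $A p \log(k^2/t)$ and $k^{-p}$ respectively. The remaining tail contribution $\Pr(T \leq t/k) \leq A(t/k)^p$ from Lemma \ref{lema} then combines with $A t^p \cdot k^{-p}$ from the $u > k$ range to yield a total constant term $2 A t^p / k^p$, and factoring $A^2 t^{p}$ out of everything reproduces the stated bound once the $A$'s are rewritten in terms of Gamma functions. The main obstacle is essentially just bookkeeping; the only non-routine ingredient is the real-exponent version of the integral identity, which is immediate.
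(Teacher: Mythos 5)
Your proposal is correct and takes essentially the same approach as the paper: it reruns the proof of Lemma \ref{lm:ST} with the bound of Lemma \ref{lema} in place of the simplex-volume bound, using the same conditioning on $T$, the real-exponent integral identity $T^{-p}=\int_0^\infty p u^{-p-1}\ind{u>T}\,\dd u$, and the split of the resulting integral at $u=k$. The only (harmless) discrepancy is that your bookkeeping yields $k^{k/\gamma}$ rather than $k^{k}$ in the final additive term --- exactly what the paper's own intermediate display produces --- and for $0<\gamma\le 1$ (the regime relevant to Theorem \ref{th2}) this is at least as strong as the stated bound, the statement's $k^{k}$ (and the proof's $k^{k\gamma}$) apparently being typos.
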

\begin{proof}
We repeat the proof of Lemma \ref{lm:ST}. The bound in \eqref{eq1} becomes 
\begin{align*}
&\E_T\brac{\frac{\bfrac{t}{T}^{k/\g}\G\brac{\frac{1}\g+1}^k}{\G\brac{\frac{k}\g+1}}1\brac{T>\frac{t}k}} +\frac{\bfrac{t}{k}^{k/\g}\G\brac{\frac{1}\g+1}^k}{\G\brac{\frac{k}\g+1}}\\
&\leq
\frac{t^{k/\g}\G\brac{\frac{1}\g+1}^k}{\G\brac{\frac{k}\g+1}}\brac{\frac{k}{\g}\frac{\G\brac{\frac{1}\g+1}^k}{\G\brac{\frac{k}\g+1}}\log\bfrac{k^2}{t}+\frac{1}{k^{k/\g}}} +\frac{\bfrac{t}{k}^{k/\g}\G\brac{\frac{1}\g+1}^k}{\G\brac{\frac{k}\g+1}}\\
&=\frac{t^{k/\g}\G\brac{\frac{1}\g+1}^{2k}} {\G\brac{\frac{k}\g+1}^2}\brac{\frac{k}{\g}\log\bfrac{k^2}{t}+ 2\frac{\G\brac{\frac{k}\g+1}}{k^{k\g}\G\brac{\frac{1}\g+1}^k}}.
\end{align*}
\end{proof}

\begin{proof}[Proof of Theorem \ref{th2} (Sketch).]
We define
\[
\cP_\b=\set{P\in \cP:w(P)c(P)\leq \frac{\b\log^2n}{n^\g}}.
\]
Using Lemma \ref{lm:STa}, \eqref{0} becomes, 
\[
\E(|P_\b|))\leq \sum_{\ell\geq 1}n^{\ell-1} \frac{(\b\log^2n)^{\ell/\g}\G\brac{\frac{1}{\g}+1}^{2\ell}}{n^\ell\G\brac{\frac{\ell}{\g}+1}^2}\brac{\frac{\ell}{\g}\log\bfrac{\ell^2}{t}+ 2\frac{\G\brac{\frac{\ell}\g+1}}{\ell^\ell\G\brac{\frac{1}\g+1}^\ell}} .
\]
We deduce from this that w.h.p.
\beq{x1}{
w(P)c(P)\gtrsim \frac{\log^2n}{4\G\brac{\frac{1}\g+1}^{2\g} n^\g}.
}
To consider the dual problem we use that if $U_1,U_2$ are independent copies of $U$, then
\[
\Pr(U_1^\g+\l U_2^\g\leq t)=\frac{t^{2/\g}\G\brac{\frac{1}{\g}+1}^2}{\l^{1/\g}\G\brac{\frac{2}{\g}+1}}
\]
valid for $0<t<1\leq \l$, see equation (36) of \cite{FT}.

As in Section \ref{s2}, thanks to \eqref{BaHo} (with $s = \g/2$) and stochastic dominance (an analogue of Lemma \ref{lm:trun}), we obtain
\beq{x2}{
\psi(\l)\lesssim \frac{\sqrt{\l^*}\log n}{\G(1+\frac1\g)^\g n^{\g/2}} = \frac{\log^2n}{2c_0\G(1+\frac1\g)^{2\g}n^\g}
}
with $\l^* = \frac{1}{4c_0^2\G(1+\frac1\g)^{2\g}}\frac{\log^2n}{n^\g}$ (chosen to minimise $\frac{\sqrt{\l}\log n}{\G(1+\frac1\g)^\g n^{\g/2}} - \l c_0$).
Applying the analogous argument in Section \ref{s3} to \eqref{x1}, \eqref{x2} we see finally that w.h.p.
\[
L_n\approx \frac{\g\log^2n}{4\G\brac{\frac{1}{\g}+1}^{2\g}c_0n^\g}\quad\text{ and }\quad H_n\approx \frac{\g\log n}{2}.
\]
A coupling argument of Janson \cite{Jan} can be used for the case where $w(e),c(e)$ have the distribution function $F_w(t)=\Pr(X\leq t)$, of a random variable $X$, that satisfies $F(t)\approx at^{1/\g},\g\leq 1$ as $t\to 0$. This argument is spelled out in detail in Section 4.1 of \cite{FT}.
\end{proof}

\section{Final Remarks}
We first observe that our proof shows that w.h.p. the duality gap between the maximum dual value and the optimal value of the solution to the constrained shortest path problem is within $o(1)$ of the optimal value to the latter problem.

The imposed range \eqref{eq:c0-assum} on $c_0$ is a by-product of our proof. It is likely off by a factor $\log n$ on both sides: a lower bound on $c_0$ of $\approx \frac{\log n}{n}$ comes from the unconstrained minimum cost of a path, whereas if $c_0\geq \log n$, then w.h.p. the unconstrained minimum length path will be within cost budget.

We would next like to mention the fact that the approach of Beier and Voeking \cite{BV1} can be applied to solve the computational problem in polynomial expected time. This paper was the first (and only?) paper to give a polynomial expected time algorithm for solving random 0-1 knapsack problems. In Theorem 2 of this paper, they give a significant generalisation which opens the door for solving the constrained shortest path problem.

Theorem 2 of that paper is
\begin{theorem}
Let $S_1,S_2,\ldots,S_m$ be a fixed but arbitrary sequence of subsets of $[N]$. Suppose that profits are chosen according to the uniform distribution over $[0,1]$. Let $q$ denote the number of dominating sets over $S_1,S_2,\ldots,S_m$. Then $\E(q)=O(N^3)$. 
\end{theorem}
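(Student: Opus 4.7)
The plan is to adapt the winner gap paradigm of Beier and V\"ocking \cite{BV1}. Ordering the sets by a deterministic secondary criterion $w$ (for instance cardinality or sequence index), call $S_j$ dominating if no other $S_k$ with $w(S_k)\leq w(S_j)$ has strictly larger profit. Then $q$ equals the size of the Pareto front in the $(w,p)$ plane, and the goal is an upper bound depending only on $N$, not on $m$.

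First, I would introduce a threshold $t\in[0,N]$ on profit and define the \emph{loser} $L(t)$ to be the dominating set of smallest profit strictly greater than $t$, together with the loser gap $\Lambda(t)=p(L(t))-t$. A level-crossing argument on the counting function $\#\{\text{dominating sets with profit}\leq t\}$ gives
\[
\E[q]\leq 1+\int_0^N \phi(t)\,\dd t,\qquad \phi(t)=\limsup_{\delta\to 0^+}\frac{\Pr(\Lambda(t)\leq \delta)}{\delta}.
\]
Thus it suffices to prove $\phi(t)=O(N^2)$ uniformly in $t$.

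Second, for each candidate loser $S_k$ I would condition on the profits $p_i$ with $i\notin S_k$. Then $p(S_k)$ is a sum of $|S_k|\leq N$ independent $U[0,1]$ random variables; since a convolution of $U[0,1]$ densities is always bounded by $1$, one obtains a pointwise bound of the form $\Pr(L(t)=S_k,\ \Lambda(t)\leq \delta)\leq \delta$. Summing over $k$ naively yields the useless $\phi(t)\leq m$. The improvement comes from a structural observation: for $S_k$ to realize the loser role, its weight $w(S_k)$ must lie in one of at most $N+1$ integer levels above the current winner's, and its profit must match the local Pareto shape; amortising across the front limits the effective number of candidate losers at each $t$ to $O(N^2)$.

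The main obstacle is exactly this amortisation, because shared elements induce strong correlations between the $p(S_k)$'s, so the counting of effective losers must be done carefully to avoid re-introducing a factor depending on $m$. This is the technical heart of the Beier--V\"ocking approach; the remedy is to condition on the complement of each tested set so as to decouple the relevant randomness before invoking the density bound. Once $\phi(t)=O(N^2)$ is secured, integrating over $t\in[0,N]$ gives $\E[q]=O(N^3)$, as required.
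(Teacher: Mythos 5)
First, note that the paper does not prove this statement at all: it is quoted verbatim from Beier and V\"ocking \cite{BV1} in the Final Remarks purely as a black box to justify a polynomial expected-time algorithm, so there is no proof in the paper to compare against. Your proposal must therefore stand on its own as a proof of the Beier--V\"ocking theorem, and as written it does not: the step you yourself call ``the technical heart'' --- showing that the density $\phi(t)$ of the loser gap is $O(N^2)$ uniformly in $t$ --- is asserted rather than proved. The amortisation you invoke is not a routine counting argument: a priori there are $m$ candidate losers at each threshold $t$, $m$ can be arbitrarily large compared to $N$, and nothing in your sketch explains why only $O(N^2)$ of them can be ``effective''. In particular, the claim that the loser's weight ``must lie in one of at most $N+1$ integer levels above the current winner's'' has no basis here: in the theorem the sets $S_1,\ldots,S_m$ and their costs are adversarial (only the profits are random), so weights need not be integers or cardinalities, and this is exactly where the naive union bound over losers breaks down.

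There is also a flaw in the decoupling step as stated. You condition on the profits $p_i$ with $i\notin S_k$ and then apply the bounded-density estimate to $p(S_k)$; but the event $\{L(t)=S_k\}$ depends on the profits \emph{inside} $S_k$ as well (through the identity of the winner and through which other sets beat $S_k$), so after your conditioning the event and the quantity $p(S_k)$ are not independent and the inequality $\Pr(L(t)=S_k,\ \Lambda(t)\leq\delta)\leq\delta$ does not follow. The actual Beier--V\"ocking argument is more delicate: for each threshold one compares the candidate loser with the current winner, singles out an element $i\in S_k\setminus S_{\mathrm{win}}$, conditions on \emph{all profits except} $p_i$, and observes that the loser-gap event then forces the single uniform variable $p_i$ into an interval of length $\delta$; the $O(N^2)$ (and ultimately $O(N^3)$) count comes from enumerating the distinguished element and the relevant threshold structure, not from an amortisation over the Pareto front. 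Without that mechanism, or a substitute for it, the proposal is an outline of the strategy rather than a proof.
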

To unpack this, we first observe that $q$ determines the running time of an algorithm of Nemhauser and Ullman \cite{NU} that can be used to solve the knapsack problem. Here $q$ is the number of sets among $S_1,S_2,\ldots,S_m$ that are not dominated by any other set. Here $S_i$ dominates $S_j$ if it has smaller cost and larger profit.  To apply the theorem we let $N=\binom{n}{2}$ and $[N]$ be associated with the edge set of $K_n$. Then we let the profit $p(e)$ of edge $e$ be equal to $1-w(e)$. We then apply the theorem separately for each $\ell=1,2,\ldots,n-1$ and let the $S_i$ correspond to the set of edges in the $\ell$-edge paths from 1 to 2. In this way we can solve the constrained shortest path problem in $O(n^7)$ expected time.

The paper \cite{FPT} allowed multiple constraints and it is a challenge to tighten the result there to get an asymptotic result, as we did here.


\begin{thebibliography}{99}
\bibitem{BH} S. Bahmidi and R. van der Hofstadt, Weak disorder asymptotics in the stochastic mean-field model of distance, {\em Annals of Applied Probability} 22 (2012) 29-69.
\bibitem{BV1} R. Beier and B. V\"ocking, Random knapsack in expected polynomial time, {\em Journal of Computer and System Sciences} 69 (2004) 306-329.
\bibitem{CN} P. Chen and Y. Nie, Bicriterion shortest path problem with a general nonadditive cost, {\em Transportation Research} B 57 (2013) 413-435.
\bibitem{CM} J. Cl\'imaco and E. Martins, A bicriterion shortest path algorithm, {\em European Journal of Operational Research} 11 (1982) 399-404.
\bibitem{FPT} A.M. Frieze, W. Pegden, G. Sorkin and T. Tkocz, \href{https://arxiv.org/pdf/1910.08977.pdf}{Minimum-weight combinatorial structures under random cost-constraints.}
\bibitem{FT} A.M. Frieze and T. Tkocz, \href{https://arxiv.org/pdf/1907.03375.pdf}{A randomly weighted minimum arborescence with a random cost constraint}.
\bibitem{Jan}  S. Janson, One, two and three times $\log n/n$ for paths in a complete graph with random weights, {\em Combinatorics, Probability and Computing} 8 (1999) 347-361.
\bibitem{MMPR} E. Machuca, L. Mandow, J. P\'erez de la Cruz and A. Ruiz-Sepulveda, A comparison of best-first algorithms foer bicriterion shortest path problems, {\em European Journal of Operational Research} 217 (2012) 44-53.
\bibitem{NU} G. Nemhauser and Z. Ullman, Discrete dynamic programming and capital allocation, {\em Management Science} 15 (1969) 494-505.
\bibitem{NPA} L. Nielsen, D. Pretolani and K. Andersen, Bicriterion Shortest Paths in Stochastic Time-Dependent Networks, in {\em Multiobjective Programming and Goal Programming, Theoretical Results and Practical Applications, V. Barichard, M. Ehrgott, X. Gandibleux, V. T'Kindt (Eds.)}, Springer, Berlin, 2009, 57-68.
\bibitem{PCC} M. Pascoal, M. Captivo and  J. Cl\'imaco, A comprehensive survey on the quickest path problem, {\em Annals of Operations Research} 147 (2006) 5-21.
\end{thebibliography}
\end{document}